\newtheorem{theorem}{Theorem}
\newtheorem{lemma}{Lemma}
\theoremstyle{remark}
\newcommand{\C}{\mathbb{C}}
\newcommand{\zb}{\overline{z}}
\newcommand{\D}{\Omega}
\newcommand{\dbar}{\overline{\partial}}
\title{Compactness of Hankel operators with continuous symbols on convex
domains}
\author{Mehmet \c{C}el\.ik}
\address[Mehmet \c{C}elik]{Texas A\&M University-Commerce, 
	Department of Mathematics, Commerce, TX 75429, USA}
\email{mehmet.celik@tamuc.edu}
\author{S\"{o}nmez \c{S}ahuto\u{g}lu}
\address[S\"{o}nmez \c{S}ahuto\u{g}lu]{University of Toledo, 
	Department of Mathematics \& Statistics, Toledo, OH 43606, USA}
\email{sonmez.sahutoglu@utoledo.edu}
\author{Emil J. Straube}
\address[Emil J. Straube]{Texas A\&M University, Department of Mathematics, 
	College Station, TX 77843, USA}
\email{straube@math.tamu.edu}
\subjclass[2010]{32W05; 46B35}
\keywords{Hankel operators, 
	convex domains, compactness, $\overline{\partial}$-Neumann operator}
\date{\today}
\begin{document}

\begin{abstract}
Let $\Omega$ be a bounded convex domain in $\mathbb{C}^{n}$, $n\geq 2$, 
$1\leq q\leq (n-1)$, and $\phi\in C(\overline{\Omega})$. If the Hankel operator 
$H^{q-1}_{\phi}$ on $(0,q-1)$--forms with symbol $\phi$ is compact, then $\phi$ 
is holomorphic along $q$--dimensional analytic (actually, affine) varieties in 
the boundary. We also prove a partial converse: if the boundary contains only 
`finitely many' varieties, $1\leq q\leq n$, and $\phi\in C(\overline{\Omega})$ 
is analytic along the ones of dimension $q$ (or higher), then $H^{q-1}_{\phi}$ 
is compact.
\end{abstract}

\maketitle

%%%%%%%%%%%%%%%%%%%%%%%%%%%%%%%%%%%%%%%%%%
\section{Introduction and results}\label{intro}

Hankel operators on Bergman spaces on bounded pseudoconvex domains with 
symbols that are continuous on the closure of the domain are compact when  
the $\overline{\partial}$--Neumann operator on the domain is compact 
(\cite{FuStraube01, HaslingerBook, StraubeBook}). It is natural to ask what 
happens when the $\overline{\partial}$--Neumann operator is not compact. Must 
there necessarily be noncompact Hankel operators (with, say, symbol continuous 
on the closure of the domain)? The answer is known only in cases where 
compactness (or lack thereof) of the $\overline{\partial}$--Neumann operator 
is understood : when the domain is convex, bounded, in $\mathbb{C}^{n}$ 
 (\cite{FuStraube01}, Remark 2), or when it is a smooth bounded pseudoconvex 
Hartogs domain in $\mathbb{C}^{2}$ (\cite{SahutogluZeytuncu17}). In these cases, 
compactness of all Hankel operators with symbols that are smooth on the closure 
implies compactness of the $\overline{\partial}$--Neumann operator. In general, 
the question is open\footnote{For the case $q=0$ and $N_{1}$. The answer is 
known to be affirmative for Hankel operators on $(0,q)$--forms with $1\leq q\leq (n-1)$
(\cite{CelikSahutoglu14}); the relevant $\overline{\partial}$--Neumann operator
is then $N_{q+1}$. Here, pseudoconvexity is important; see 
\cite{CelikSahutoglu12}.}; having an answer would be very interesting. 
Alternatively, one can consider a particular situation where the 
$\overline{\partial}$--Neumann operator is not compact and ask for necessary 
and/or sufficient conditions on a symbol that will imply compactness of 
the associated Hankel operator. Given that two symbols whose difference extends 
continuously to the boundary as zero yield Hankel operators that agree modulo a 
compact operator, one would in particular like to understand how the interaction 
of the symbol with the boundary affects compactness of the associated Hankel 
operator. It is this question that we are interested in in the current paper.

Such a study was initiated in \cite{CuckovicSahutoglu09}. For symbols smooth on 
the closure of a smooth bounded pseudoconvex domain in $\mathbb{C}^{n}$, the 
authors show, under the condition that the rank of the Levi form is at least 
$(n-2)$, that compactness of the Hankel operator requires that the symbol is 
holomorphic along analytic discs in the boundary. When the domain is also 
convex, they can dispense with the condition on the Levi form.\footnote{In both 
these cases, compactness of the $\overline{\partial}$--Neumann operator was known 
to fail when there are discs in the boundary; \cite[Theorem 1]{SahutogluStraube06}, 
\cite[Theorem 1.1]{FuStraube98}.} Moreover, for (smooth) convex domains in 
$\mathbb{C}^{2}$, holomorphy of the symbol along analytic discs in the boundary 
is also sufficient for compactness of the Hankel operator. Further contributions 
are in  \cite{Le10,CuckovicSahutoglu17,ClosSahutoglu18,ClosAccepted}; 
we refer the reader to the introduction in \cite{ClosCelikSahutoglu18} for a 
summary. The latter authors significantly reduce the regularity requirements 
on both the domain  (Lipschitz in $\mathbb{C}^{2}$ or convex in $\mathbb{C}^{n}$) 
and the symbol  (in $C(\overline{\Omega})$) that is required to infer holomorphicity 
of the symbol  along analytic discs in the boundary from compactness of the 
Hankel operator.  Because compactness of a Hankel operator localizes  
(\cite{ClosCelikSahutoglu18}; see also \cite{Sahutoglu12}), the latter result carries 
over to locally convexifiable domains. In \cite{CelikSahutogluStraube20}, the 
authors, among other things, extend the result from \cite{ClosCelikSahutoglu18} 
on convex domains in $\C^n$ to Hankel operators on ($\overline{\partial}$--closed) 
$(0,q)$--forms with $0\leq q\leq (n-1)$, but with symbol assumed in $C^{1}$ of 
the closure. When the (convex) domain is smooth and satisfies so called maximal 
estimates\footnote{This condition is equivalent to a comparable eigenvalues 
condition for the Levi form of the boundary, see the discussion in 
\cite{CelikSahutogluStraube20} and their references.}, holomorphicity of the 
symbol along $(n-1)$--dimensional analytic (equivalently: affine) polydiscs in 
the boundary suffices for compactness of the associated Hankel 
operator.\footnote{As noted in \cite{CelikSahutogluStraube20}, these two results 
combined imply that a convex domain that satisfies maximal estimates for 
$(0,q)$--forms does not have any analytic varieties of dimension $\geq q$ in its 
boundary except ones in top dimension $(n-1)$ (and their subvarieties). It would 
be desirable to have a direct proof for this fact.} Finally we 
mention the recent \cite{ChengJinWang}, where the authors consider Hankel 
operators with form symbols (replacing multiplication with the wedge product) 
and prove many of the results discussed above for this situation.

Our first result (Theorem \ref{ThmNonCompact}) reduces the 
regularity of the symbol in Theorem 1 in \cite{CelikSahutogluStraube20} to 
$C(\overline{\Omega})$. It also corrects a geometric issue in the proof of 
Theorem 2 in \cite{ClosCelikSahutoglu18} (where the result is given for $q=1$ 
only). 

Since necessary and sufficient conditions for compactness of the 
$\overline{\partial}$--Neumann operator are understood on convex domains
(\cite{FuStraube98, FuStraube01, StraubeBook}), it is reasonable to expect that
when the domain is convex, one should likewise obtain simple necessary and
sufficient conditions on the symbol that will guarantee compactness of the
Hankel operator. That is, one expects the converse of Theorem
\ref{ThmNonCompact} to hold: when the symbol is holomorphic along
$q$--dimensional varieties in the boundary, $H^{q-1}_{\phi}$ should be compact.
As mentioned above, this implication is known for $C^{1}$ convex domains in
$\mathbb{C}^{2}$ and symbol in $C^{1}(\overline{\Omega})$ 
(\cite[Theorem 3]{CuckovicSahutoglu09}, 
\cite[Theorem (\v{C}u\c{c}kovi\'{c}--\c{S}ahuto\u{g}lu)]{ClosCelikSahutoglu18}, 
as well as for some classes of Reinhardt domains in $\mathbb{C}^{2}$, 
with symbol only assumed in $C(\overline{\Omega})$ 
(\cite[Theorem 1]{ClosSahutoglu18}, \cite[Theorem 1]{ClosAccepted}). 
Theorem \ref{converse} says that the implication is true for convex domains in 
higher dimensions as well, when there are only `finitely many' varieties in 
the boundary. We expect this additional assumption to be an artifact of our 
current proof. On the other hand, our result appears to be the first instance of a 
converse to Theorem \ref{ThmNonCompact} in dimension greater than two.

Before stating our results precisely, we recall the notation form
\cite{CelikSahutogluStraube20} (which is fairly standard). For a
bounded domain $\Omega\subset\mathbb{C}^{n}$, denote by $K^{2}_{(0,q)}(\Omega)$
the space of square integrable $\overline{\partial}$--closed $(0,q)$--forms, and
by $A^{2}_{(0,q)}(\Omega)$ the subspace of forms with holomorphic coefficients.
$P_{q}: L^{2}_{(0,q)}(\Omega)\rightarrow K^{2}_{(0,q)}(\Omega)$ is the
orthogonal projection, the Bergman projection on $(0,q)$--forms. For a symbol
$\phi\in L^{\infty}(\Omega)$, the associated Hankel operator is
$H^{q}_{\phi}:K^{2}_{(0,q)}(\Omega)\rightarrow L^{2}_{(0,q)}(\Omega)$,
\begin{align*}
 H^{q}_{\phi}f = \phi f - P_{q}(\phi f),
\end{align*}
with $\|H^{q}_{\phi}\|\leq \|\phi\|_{L^{\infty}(\Omega)}$. $H^{q}_{\phi}$ equals
the commutator $[\phi,P_{q}]$ (since $P_{q}f = f$), so that statements about
(compactness of) Hankel operators may also be viewed as statements about
commutators between the Bergman projection and multiplication operators. 
When the symbol $\phi$ is in $C^{1}(\overline{\Omega})$, Kohn's formula, 
$P_{q} = \overline{\partial}^{*}N_{q+1}\overline{\partial}$, implies
\begin{align}\label{Kohn}
 H^{q}_{\phi}f = \overline{\partial}^{*}N_{q+1}(\overline{\partial}\phi\wedge f);
\end{align}
here, $N_{q+1}$ is the $\overline{\partial}$--Neumann operator on $(0,q+1)$--forms.

We can now state our first result.
\begin{theorem}\label{ThmNonCompact} 
Let $\Omega$ be a bounded convex domain in $\C^n$, $n\geq 2$, and 
$\phi\in C(\overline{\Omega})$. Let $\psi:\mathbb{D}^q\to b\D$ be a 
holomorphic embedding for some $q$ with $1\leq q\leq n-1$.
If $H^{q-1}_{\phi}$ is compact on $A^2_{(0,q-1)}(\D)$ (a fortiori if it is
compact on $K^{2}_{(0,q-1)}(\Omega)$), then $\phi\circ \psi$ is holomorphic.
\end{theorem}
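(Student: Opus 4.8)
The plan is to argue by contradiction, converting non‑holomorphicity of $\phi\circ\psi$ into a pairing $\langle \phi F_k,\widetilde G_k\rangle$ whose limit is a nonzero multiple of a test integral, while compactness of $H^{q-1}_\phi$ forces that limit to be $0$.

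\smallskip\noindent\textbf{Normal form.}
First I would put the variety in an affine normal form. Because $\Omega$ is convex, a supporting hyperplane of $\Omega$ at any point of $\psi(\mathbb{D}^q)$ must contain all of $\psi(\mathbb{D}^q)$: restricting the supporting real–linear functional to the connected complex manifold $\psi(\mathbb{D}^q)$ yields a pluriharmonic function attaining its maximum at an interior point, hence constant there. Intersecting all such hyperplanes and passing to the largest complex subspace shows that $\psi(\mathbb{D}^q)$ lies in a complex affine $q$–plane contained in $b\Omega$; after an affine change of coordinates this plane is $\C^q_{z'}\times\{0\}\subset\C^q_{z'}\times\C^{n-q}_{z''}$, and $\psi=(\Phi,0)$ with $\Phi:\mathbb{D}^q\to V:=\Phi(\mathbb{D}^q)$ a biholomorphism onto an open $V\subset\C^q$. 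It then suffices to prove that $g_0(z'):=\phi(z',0)$ is holomorphic on $V$; since $\overline{\partial}$ is elliptic and $\phi$ is continuous, this amounts to showing
\[
 \int_V g_0(z')\,\frac{\partial\chi}{\partial\overline{z}_j}(z')\,dV(z')=0\qquad\text{for all }\chi\in C^\infty_c(V),\ j=1,\dots,q .
\]
Fixing a point $p$ of $V\times\{0\}$ and using that compactness of Hankel operators is a local property, one may assume $\Omega$ is small; here one must identify, from the convex geometry at $p$, the coordinate splitting genuinely adapted to the variety — a flat piece $\C^m\times\{0\}\subset b\Omega$ with $q\le m\le n-1$ containing $V\times\{0\}$, together with supporting functionals on the transverse $\C^{n-m}$ controlling it — and it is exactly this step that repairs the geometric gap in the earlier argument.

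\smallskip\noindent\textbf{Test sequences.}
Choose a supporting function adapted to $p$: an affine (hence holomorphic) $u$ on $\C^n$ with $\operatorname{Re}u\ge 0$ on $\overline{\Omega}$, $\operatorname{Re}u\equiv 0$ on $\C^m\times\{0\}$, and $\operatorname{Re}u$ growing like the distance to that flat piece in the appropriate transverse directions. Set
$F_k:=c_k\,e^{-ku}\;d\overline{z}_1\wedge\cdots\wedge\widehat{d\overline{z}_j}\wedge\cdots\wedge d\overline{z}_q$,
with $c_k>0$ chosen so that $\|F_k\|_{L^2_{(0,q-1)}(\Omega)}=1$. These are holomorphic $(0,q-1)$–forms, and because $e^{-ku}$ concentrates toward the boundary one checks, by testing against the dense set of forms with coefficients holomorphic in a neighbourhood of $\overline{\Omega}$, that $F_k\rightharpoonup 0$ in $A^2_{(0,q-1)}(\Omega)$. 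For the dual side put
$w_k:=\overline{\chi}(z')\,\mu\,\nu_k\;d\overline{z}_1\wedge\cdots\wedge d\overline{z}_q$,
where $\chi\in C^\infty_c(V)$, $\mu$ is a fixed bump in the transverse directions along which $\operatorname{Re}u$ does \emph{not} grow, and $\nu_k$ is an $L^1$–normalized bump concentrating at the origin in the remaining transverse directions, so that $w_k$ is compactly supported in $\Omega$ for $k$ large; set $G_k:=\overline{\partial}^{\,*}w_k$. Since $w_k$ has compact support, $\langle G_k,h\rangle=0$ for every $\overline{\partial}$–closed $h$, so $G_k\perp K^2_{(0,q-1)}(\Omega)$; and since $\mu\nu_k$ is independent of $z'$ while $\chi$ is independent of $z''$, the component of $G_k$ along $d\overline{z}_1\wedge\cdots\wedge\widehat{d\overline{z}_j}\wedge\cdots\wedge d\overline{z}_q$ is $\pm\,\overline{\partial\chi/\partial\overline{z}_j}\;\mu\nu_k$.

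\smallskip\noindent\textbf{Passing to the limit and the obstacle.}
Write $\widetilde G_k:=G_k/\|G_k\|$. As $\widetilde G_k\perp K^2_{(0,q-1)}(\Omega)$ we get $\langle H^{q-1}_\phi F_k,\widetilde G_k\rangle=\langle\phi F_k,\widetilde G_k\rangle$, and the left side tends to $0$ by compactness of $H^{q-1}_\phi$ and $F_k\rightharpoonup 0$. On the other hand, using the component of $G_k$ computed above,
\[
 \langle\phi F_k,\widetilde G_k\rangle=\frac{\pm 1}{\|G_k\|}\int_\Omega \phi\;c_ke^{-ku}\;\frac{\partial\chi}{\partial\overline{z}_j}(z')\;\overline{\mu\nu_k}\;dV ,
\]
and as $k\to\infty$ the integrand lives near $\{z''=0\}$ over $\operatorname{supp}\chi$; continuity of $\phi$ replaces $\phi(z)$ by $\phi(z',0)=g_0(z')$ there, and rescaling the concentrated transverse variables by $1/k$ (using that $u$ is affine) turns the $\mu$– and $\nu_k$–factors, together with $c_k$ and $\|G_k\|$, into a fixed constant $\kappa$. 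Choosing the bumps so that $\kappa\ne 0$, we obtain $\langle\phi F_k,\widetilde G_k\rangle\to\kappa\int_V g_0(z')\,\partial\chi/\partial\overline{z}_j\,dV$, which must therefore vanish for all $\chi$ and $j$; hence $g_0$ is holomorphic on $V$ and $\phi\circ\psi=g_0\circ\Phi$ is holomorphic. The hard part is the geometric normalization: producing coordinates in which $\psi(\mathbb{D}^q)$ sits in a flat boundary piece with the transverse convex geometry under control, correctly separating the transverse directions along which the supporting function grows (which must be concentrated in $\nu_k$) from the extra complex directions and any totally real segments of the boundary (which must only be spread in $\mu$), and then balancing the powers of $k$ hidden in $c_k$, $\|G_k\|$ and the rescaling so that $\kappa\ne 0$. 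Everything else is routine once this is arranged.
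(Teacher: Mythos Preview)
Your overall architecture---contradiction via a pairing that vanishes by compactness but equals a nonzero test integral---matches the paper's, but the construction is genuinely different. The paper exploits the product structure $\mathbb{D}^q\times\beta\Omega_0\subset\Omega$ directly: it fixes a single test function $h\in C^\infty_c(\mathbb{D}^q)$ (from the weak characterization of holomorphicity, your same lemma), takes a bounded weakly null sequence $f_j(z_n)\in A^2(\Omega)$ normalized on the transverse slice $\beta\Omega_0$, sets $\alpha_j=f_j\,d\bar z_2\wedge\cdots\wedge d\bar z_q$, and pairs \emph{fiberwise} over each $\mathbb{D}^q\times\{z''\}$ with $\bar\partial^*_{z'}(h\,d\bar z_1\wedge\cdots\wedge d\bar z_q)$. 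The key observation is that $P_{q-1}(\phi\alpha_j)$ restricted to each fiber is $\bar\partial_{z'}$--closed, so the fiberwise pairing sees only $H^{q-1}_\phi\alpha_j$; then Cauchy--Schwarz on each fiber followed by integration over $\beta\Omega_0$ (using that restriction to fibers decreases pointwise norms) gives a uniform lower bound $\delta\lesssim\|H^{q-1}_\phi\alpha_j\|_{L^2(\Omega)}$ with \emph{no rescaling at all}.

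By contrast, you build both sides to concentrate simultaneously: $F_k=c_k e^{-ku}(\cdots)$ and $G_k=\bar\partial^*w_k$ with a $k$--dependent transverse bump $\nu_k$, and then must balance $c_k$, $\|G_k\|$, and the rescaling of the transverse integral so that the limit is a nonzero constant times $\int_V g_0\,\partial\chi/\partial\bar z_j$. This is the classical Catlin/Diederich--Pflug style; it can be made to work, but the bookkeeping you flag as ``the hard part'' (separating the transverse directions into those where $\operatorname{Re}u$ grows versus the rest, and matching the powers of $k$) is precisely what the paper's fiberwise argument sidesteps. Two further points to watch in your route: the weak convergence $F_k\rightharpoonup 0$ needs an argument beyond ``concentrates toward the boundary'' (a single affine $u$ decays in only one real direction, so density of forms holomorphic near $\overline\Omega$ plus an explicit estimate is required), and your invocation of localization of Hankel compactness is unnecessary here---the paper works globally once the affine normal form is set.
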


Before stating Theorem \ref{converse}, we first recall the basic facts about
analytic varieties in the boundaries of convex domains from 
\cite[Section 2 and Proposition 3.2]{FuStraube98} and 
\cite[Lemma 2]{CuckovicSahutoglu09}. Suppose
$\psi:\mathbb{D}^{q}\rightarrow b\Omega$ is a holomorphic embedding, where
$\Omega\subset\mathbb{C}^{n}$ is convex. Then the convex hull of the image
$\psi(\mathbb{D}^{q})$ is contained in the intersection of a complex hyperplane
$H$ through $P\in \psi(\mathbb{D}^{q})$ with $\overline{\Omega}$ 
(\cite[Section 2]{FuStraube98}). So it suffices to consider affine varieties in the 
boundary of this kind. Among these varieties through a boundary point $P$, 
there is a unique one, denoted by $V_{P}$, that has $P$ as a relative interior 
point and whose dimension $m$ is maximal. Then $0\leq m\leq (n-1)$, and 
the relative closure $\overline{V_{P}}$ is the intersection of an $m$--dimensional 
affine subspace through $P$ (contained in $H$) with $\overline{\Omega}$.

Our second result is as follows.
\begin{theorem}\label{converse}
Let $\D$ be a bounded convex domain in $\C^n, \phi\in C(\overline{\Omega}),$ and 
$1\leq q\leq n$. Assume that the boundary of $\D$ contains at most finitely many 
disjoint varieties $\{\overline{V_{P_{k}}}\}$ of dimension $q$ or higher as above, 
$k=1, \ldots, N$. Furthermore, assume that $\phi\circ\psi$ is holomorphic for every 
embedding $\psi:\mathbb{D}^q\to b\Omega$. Then $H^{q-1}_{\phi}$ is compact 
on $K^2_{(0,q-1)}(\D)$.  
\end{theorem}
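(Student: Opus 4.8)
The plan is to reduce compactness of $H^{q-1}_\phi$ to a localized statement near the finitely many "bad" varieties $\overline{V_{P_k}}$, and there to exploit the fact that $\phi$ is holomorphic along the top-dimensional polydiscs in the boundary. First I would invoke the localization principle for compactness of Hankel operators (\cite{ClosCelikSahutoglu18}, see also \cite{Sahutoglu12}): it suffices to show that each point $p\in b\Omega$ has a neighborhood $U_p$ such that $H^{q-1}_{\chi\phi}$ is compact whenever $\chi\in C_c^\infty(U_p)$. For a point $p$ that does not lie on any variety of dimension $\geq q$ in the boundary, the boundary is, near $p$, free of $q$-dimensional analytic structure, so the relevant $\overline{\partial}$--Neumann operator $N_q$ is compact locally there (by the characterization of compactness on convex domains, \cite{FuStraube98,FuStraube01,StraubeBook}, together with a localization of the $\overline{\partial}$--Neumann operator); compactness of the Hankel operator on $(0,q-1)$--forms then follows from the standard implication "compact $\overline{\partial}$--Neumann operator $\Rightarrow$ compact Hankel operators" (\cite{FuStraube01,StraubeBook}) applied locally. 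Because there are only finitely many bad varieties and they are disjoint and compact, a partition-of-unity argument leaves us with finitely many pieces, each supported near one $\overline{V_{P_k}}$.

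The heart of the matter is therefore the local estimate near a single variety $\overline{V_P}$ of dimension $m$ with $q\leq m\leq n-1$. Here I would use the geometry recalled before the theorem: $\overline{V_P}$ sits in an $m$--dimensional affine subspace, and after an affine change of coordinates we may assume it lies in $\{z_{m+1}=\cdots=z_n=0\}$, with $\Omega$ locally looking like a product of the base polydisc directions with the transverse directions $z_{m+1},\dots,z_n$, in which the domain is "thin" and strictly pseudoconvex-like. The hypothesis that $\phi\circ\psi$ is holomorphic for every embedding $\psi:\mathbb D^q\to b\Omega$ — hence, by the structure of varieties in convex boundaries and a limiting/slicing argument, that $\phi$ is holomorphic along each $\overline{V_P}$ of dimension $\geq q$ — lets us write, modulo a symbol vanishing on $b\Omega$ (which contributes only a compact operator, as noted in the introduction), $\phi$ as a function that is holomorphic in the $m$ "flat" variables near $\overline{V_P}$. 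The key step is then to show $H^{q-1}_\phi$ is compact for such a symbol: I would estimate $\|H^{q-1}_\phi f\|$ for $f\in K^2_{(0,q-1)}$ supported near $\overline{V_P}$ by controlling $\overline{\partial}\phi\wedge f$ — the point being that $\overline{\partial}\phi$ has no components in the $d\overline z_1,\dots,d\overline z_m$ directions, so $\overline{\partial}\phi\wedge f$ only involves the transverse conjugate differentials $d\overline z_{m+1},\dots,d\overline z_n$, and compactness of $N_q$ in those transverse (strictly pseudoconvex, finite-type, or simply "thin") directions — quantitatively, a compactness estimate of the form $\|u\|^2\leq \varepsilon(\|\overline{\partial}u\|^2+\|\overline{\partial}^*u\|^2)+C_\varepsilon\|u\|_{-1}^2$ for $(0,q)$--forms supported near $\overline{V_P}$ — gives compactness of the Hankel operator via \eqref{Kohn} (first for $\phi\in C^1$, then by density and the uniform bound $\|H^{q-1}_\phi\|\le\|\phi\|_\infty$ for $\phi\in C(\overline\Omega)$).

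To make the transverse compactness estimate rigorous I would argue that, locally near $\overline{V_P}$, the "transverse part" of the domain is a bounded convex domain in $\mathbb C^{n-m}$ whose boundary contains no analytic varieties of dimension $\geq 1$ near the relevant point (otherwise $V_P$ would not be of maximal dimension $m$), hence has compact $\overline{\partial}$--Neumann operator $N_1^{\text{transverse}}$; tensoring with the Bergman space in the flat directions and using that $\phi$ is holomorphic there, one gets the desired estimate for the full $N_q$ restricted to forms of the relevant type. The main obstacle I expect is precisely this last point — patching the transverse compactness estimate together with the holomorphic dependence in the flat variables into a genuine compactness estimate for $N_q$ (or directly for $H^{q-1}_\phi$) on the curved domain $\Omega$, rather than on an idealized product — and in particular handling the boundary terms in the $\overline{\partial}$--Neumann form that arise from the curvature of $b\Omega$ in the flat directions. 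The disjointness and finiteness hypothesis on the varieties is what makes the partition of unity clean and is, as the authors say, expected to be removable; in the write-up I would isolate the transverse estimate as a lemma and then feed it into Kohn's formula \eqref{Kohn} to conclude.
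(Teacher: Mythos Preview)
Your outline diverges from the paper's proof and contains a genuine gap at the step you yourself flag as the ``main obstacle.''

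The decomposition you propose near a variety $\overline{V_P}$ does not work as stated. You claim that, modulo a symbol vanishing on $b\Omega$, $\phi$ can be taken holomorphic in the $m$ flat variables near $\overline{V_P}$. But the hypothesis only gives holomorphicity of $\phi|_{V_P}$ on the variety itself; there is no reason for $\phi$ to be holomorphic in $z_1,\dots,z_m$ on an \emph{open} subset of $\overline{\Omega}$, and the difference between $\phi$ and any such modification vanishes at best on $\overline{V_P}$, not on $b\Omega$. Even granting a decomposition $\phi=g+r$ with $g$ holomorphic in the flat variables and $r$ vanishing on $\overline{V_P}$, the transverse components $\partial g/\partial\overline{z_j}$, $j>m$, need not vanish on $V_P$ (the paper explicitly notes this after the statement of the theorem), so the compactness multiplier machinery does not apply to $g$. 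You are then forced into the ``transverse compactness'' estimate---a compactness estimate for $N_q$ on forms built only from $d\overline{z}_{m+1},\dots,d\overline{z}_n$---on a domain that is \emph{not} a product. This is not available in the literature, and your sketch of deducing it from compactness of a one-variable $\overline{\partial}$--Neumann operator on transverse slices plus tensoring does not survive the curvature of $b\Omega$; the $\overline{\partial}$--Neumann problem on $\Omega$ mixes all directions through the boundary condition.

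The paper sidesteps this entirely with a cleaner idea. Since $\phi|_{V_{P_j}}$ is holomorphic and continuous up to the relative boundary, one can dilate and then extend trivially (constant in the transverse variables) to get $h_j$ holomorphic on a full $\mathbb{C}^n$-neighborhood $U_j$ of $\overline{V_{P_j}}$ with $\|\phi-h_j\|_{L^\infty(U_j\cap\overline{\Omega})}<2\varepsilon$. With cutoffs $\chi_j\equiv 1$ near $\overline{V_{P_j}}$ and $\chi_0=1-\sum\chi_j$, one splits
\[
H^{q-1}_\phi \;=\; H^{q-1}_{\chi_0\phi}\;+\;H^{q-1}_{\sum_j\chi_jh_j}\;+\;H^{q-1}_{\sum_j\chi_j(\phi-h_j)}\,.
\]
The last term has norm $\le 2\varepsilon$. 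For the first two, the paper proves a lemma: $H^{q-1}_\psi$ is compact whenever either $\psi\in C(\overline{\Omega})$ vanishes on $\cup_j\overline{V_{P_j}}$ (covers $\chi_0\phi$), or $\psi\in C^1(\overline{\Omega})$ with $\overline{\partial}\psi$ vanishing on $\cup_j\overline{V_{P_j}}$ (covers $\sum_j\chi_jh_j$, since $\overline{\partial}(\chi_jh_j)=h_j\overline{\partial}\chi_j$). Both cases follow from the compactness--multiplier results of \c{C}elik--Straube: a continuous function vanishing on all the $q$-dimensional varieties in $b\Omega$ is a compactness multiplier for $(0,q)$-forms, which feeds directly into \eqref{Kohn} and yields the family of estimates \eqref{compcond}. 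No localization of Hankel operators or of $N_q$, no product structure, and no ``transverse'' estimate is needed; the finiteness and disjointness of the $\overline{V_{P_j}}$ is used only to build the $h_j$ and $\chi_j$. The point you are missing is that by making $h_j$ holomorphic in \emph{all} variables (via trivial extension), $\overline{\partial}(\chi_jh_j)$ vanishes \emph{identically} near the varieties, not merely in the flat directions, and the compactness multiplier argument applies cleanly.
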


Note that the condition on holomorphicity of $\phi\circ\psi$ just says 
that $\phi|_{V_{P_{K}}}$ is holomorphic on all $V_{P_{k}}$ of dimension $q$
or higher. We also point out that this assumption on $\phi$ does not
imply that the tangential component of $\overline{\partial}\phi$ 
(say when $\Omega$ is smooth) vanishes on the $V_{P_{k}}$ of dimension 
$q$ or higher; the components transverse to the varieties need not be zero. 

If $\Omega$ is assumed $C^{1}$, the relative closures of distinct varieties are
automatically disjoint: if $Q\in
\overline{V_{P_{k_{1}}}}\cap\overline{V_{P_{k_{2}}}}$, then both $V_{P{k_{1}}}$
and $V_{P{k_{2}}}$ have to be contained in the complex tangent space to
$b\Omega$ at $Q$, and considering the convex hull as above produces a variety
which contains them both. However, without a regularity assumption on $b\Omega$,
the supporting complex hyperplane is not unique, and two distinct varieties may
share a boundary point (as on the boundary of $\mathbb{D}\times\mathbb{D}$).

When $q=n$, there are no $q$--dimensional varieties in the boundary, and the
theorem says that $H^{n-1}_{\phi}$ is always (when $\phi\in
C(\overline{\Omega})$) compact. But this is clear from \eqref{Kohn}, at least
when $\phi\in C^{1}(\overline{\Omega})$, because $N_{n}$, and hence
$\overline{\partial}^{*}N_{n}$, is compact (\cite[Theorem 1.1]{FuStraube98}).
When $\phi$ is merely in $C(\overline{\Omega})$, it can be approximated
uniformly on $\overline{\Omega}$ by smooth functions; the corresponding
(compact) Hankel operators converge in norm to $H_{\phi}^{n-1}$.

As pointed out in \cite[Remark 1]{ClosCelikSahutoglu18}, Theorem \ref{converse}
fails on general domains, that is, without the assumption of convexity or a
related condition. This failure is related to the subtleties surrounding
compactness in the $\overline{\partial}$--Neumann problem on general domains
(but absent in the case of convex domains). Namely, there are smooth bounded
pseudoconvex complete Hartogs domains in $\mathbb{C}^{2}$ without discs in the
boundary and noncompact Hankel operators on them whose symbols are smooth 
on the closure. These symbols trivially satisfy the assumption on holomorphicity 
along analytic discs. The domains were originally constructed in \cite{M98} 
as examples of smooth bounded pseudoconvex complete Hartogs domains 
without discs in the boundary whose $\overline{\partial}$--Neumann operator 
$N_{1}$ is nevertheless not compact 
(see also \cite[Theorem 10]{FuStraube01}, \cite[Theorem 4.25]{StraubeBook}). 
But on these domains, noncompactness of $N_{1}$ implies that there are 
symbols smooth on the closure so that the associated Hankel operator is not 
compact (\cite[Theorem 1]{SahutogluZeytuncu17}). 

%%%%%%%%%%%%%%%%%%%%%%%%%%%%%%%%%%%
\section{Proofs}\label{proofs}

The following simple lemma formulates the lack of holomorphicity of a function
without relying on differentiability, in contrast to
\cite{CelikSahutogluStraube20}; this ultimately allows the regularity of the
symbol to be lowered from $C^{1}(\overline{\Omega})$ to $C(\overline{\Omega})$
in Theorem \ref{ThmNonCompact}.

\begin{lemma}\label{LemWeakHolo}
Let $\Omega$ be a domain in $\C^n$ and $\phi\in C(\Omega)$ that is not
holomorphic. 
Then there exist $h\in C^{\infty}_0(\Omega)$ and $1\leq j\leq n$ such that 
$ \int_{\Omega}\phi\overline{h_{z_j}} \neq 0.$
\end{lemma}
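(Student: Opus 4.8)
The plan is to argue by contradiction: suppose $\int_{\Omega}\phi\,\overline{h_{z_j}}=0$ for every $h\in C^{\infty}_0(\Omega)$ and every $1\le j\le n$, and show that this forces $\phi$ to be holomorphic. The point is that this family of integral identities is exactly the statement that the distributional derivatives $\partial\phi/\partial\overline{z}_j$ vanish for all $j$, which, together with the continuity (hence local integrability) of $\phi$, will yield holomorphicity via Weyl's lemma (or, equivalently, hypoellipticity of the $\overline{\partial}$ operator / the Laplacian).

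First I would unwind the hypothesis. For fixed $j$ and fixed $h\in C^{\infty}_0(\Omega)$, the complex conjugate of $h_{z_j}=\partial h/\partial z_j$ is $\overline{h_{z_j}}=\partial\overline{h}/\partial\overline{z}_j$. Since $\overline{h}$ also ranges over all of $C^{\infty}_0(\Omega)$ as $h$ does (and testing against $\overline{h}$ is the same as testing against an arbitrary test function, up to relabeling), the assumption says precisely that
\begin{equation*}
\int_{\Omega}\phi\,\frac{\partial g}{\partial\overline{z}_j}=0\qquad\text{for all } g\in C^{\infty}_0(\Omega),\ 1\le j\le n.
\end{equation*}
By definition of distributional derivative, this means $\partial\phi/\partial\overline{z}_j=0$ in $\mathcal{D}'(\Omega)$ for every $j$; equivalently $\overline{\partial}\phi=0$ as a current.

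Next I would invoke elliptic regularity. Applying $\partial/\partial z_j$ to $\partial\phi/\partial\overline{z}_j=0$ and summing over $j$ shows $\Delta\phi=0$ in the sense of distributions, so by Weyl's lemma $\phi$ is (real-analytic, in particular) smooth on $\Omega$. Once $\phi$ is known to be $C^1$, the distributional identities $\partial\phi/\partial\overline{z}_j=0$ become the classical Cauchy–Riemann equations, so $\phi$ is holomorphic on $\Omega$, contradicting the hypothesis that $\phi$ is not holomorphic. (Alternatively one can cite directly that $\overline{\partial}$ is hypoelliptic, bypassing the Laplacian; either route is standard.)

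The only mild subtlety — and the step I would be most careful about — is the bookkeeping with complex conjugates in passing from "$\int\phi\,\overline{h_{z_j}}=0$ for all $h$" to "$\partial\phi/\partial\overline{z}_j=0$ as a distribution": one must note that $C^{\infty}_0(\Omega)$ is closed under conjugation and that $\overline{h_{z_j}}=(\overline{h})_{\overline{z}_j}$, so no generality is lost. Beyond that, the argument is entirely soft; no estimates or geometry of $\Omega$ are needed, which is exactly why this formulation lets one drop the $C^1$ hypothesis on the symbol in Theorem \ref{ThmNonCompact}.
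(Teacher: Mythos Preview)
Your proposal is correct and matches the paper's own proof essentially verbatim: argue by contrapositive, read the vanishing integrals as $\overline{\partial}\phi=0$ in the sense of distributions, and then invoke interior ellipticity of $\overline{\partial}$ (or, equivalently, Weyl's lemma via $\Delta\phi=0$) to conclude that $\phi$ is genuinely holomorphic. Even the two alternative regularity routes you mention coincide with those in the paper.
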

\begin{proof} 
Assume that  $\int_{\Omega}\phi\overline{h_{z_{j}}} = 0$ for all $j$ and 
$h\in C^{\infty}_0(\Omega)$. This means that $\overline{\partial}\phi=0$ as a
distribution
on $\Omega$. But $\overline{\partial}$ is elliptic in the interior, so that
$\phi$ is an ordinary holomorphic function\footnote{Alternatively,
$\overline{\partial}\phi=0$ implies $\Delta\phi = 0$, so by Weyl's Lemma, $\phi$
is a $C^{\infty}$ function (and therefore holomorphic).}, a
contradiction. 
\end{proof}
\begin{proof}[Proof of Theorem \ref{ThmNonCompact}]
We start the proof of Theorem \ref{ThmNonCompact} as in \cite[Proof of Theorem
1]{CelikSahutogluStraube20}. 
After dilation, translation and rotation if necessary, we assume that  
$(2\mathbb{D})^q\times\{0\}\subset b\D$ and, seeking to derive a contradiction,
that the function 
$\phi(z_1,\ldots,z_q,0,\ldots,0)$ is not holomorphic on $\mathbb{D}^q$. 
Let us denote 
$\D_0=\{(z_{q+1},\ldots,z_n)\in \C^{n-q}:(0,\ldots,0,z_{q+1},\ldots,z_n)\in
\D\}$, 
the slice transversal to $\mathbb{D}^q$ through the origin. Then 
convexity of $\D$  implies that $\mathbb{D}^q\times \beta \D_0\subset \D$ 
for any $0<\beta\leq 1/2$ (\cite[page 636, proof of $(1)\Rightarrow
(2)$]{FuStraube98}; \cite[Proof of Theorem 2]{ClosCelikSahutoglu18}). 
Lemma \ref{LemWeakHolo} implies that there exist  
$h\in C^{\infty}_0(\mathbb{D}^q)$ and $\delta>0$  such that 
\begin{align*}
\left| \int_{\mathbb{D}^q} \phi(z_1,\ldots,z_q,0,\ldots,0) 
\frac{\partial \overline{h(z_1,\ldots,z_q)}}{\partial
\zb_1}dV(z_1,\ldots,z_q)\right| 
\geq 2\delta.
\end{align*}
Furthermore, continuity of $\phi$ implies that there exists $0<\beta<1/2$ such
that 
\begin{align*}
\left| \int_{\mathbb{D}^q} \phi(z_1,\ldots,z_n) 
\frac{\partial \overline{h(z_1,\ldots,z_q)}}{\partial
\zb_1}dV(z_1,\ldots,z_q)\right| 
\geq \delta
\end{align*}
for all $(z_{q+1},\ldots,z_n)\in \beta \D_0$. As in 
\cite[Proof of Theorem 2]{ClosCelikSahutoglu18} we use their Lemma 5 
to produce a bounded  sequence 
$\{f_j\}_{j=1}^{\infty}=\{f_j(z_{n})\}_{j=1}^{\infty}\subset A^2(\D)$
such that $\|f_j\|_{L^2(\beta\D_0)}=1$ and $f_{j}\rightarrow 0$ weakly in
$A^{2}_{(0,0)}(\Omega)$\footnote{Alternatively, one can modify the argument
below slightly and use the construction in 
\cite[Proof of $(1)\Rightarrow (2)$ in Theorem 1.1]{FuStraube98}.}. 
We define $\alpha_j=f_jd\zb_2\wedge \cdots\wedge d\zb_q.$ Then the sequence
$\{\alpha_{j}\}_{j=1}^{\infty}$ also has (the analogues of) these two
properties.

We have 
 \begin{align}
\nonumber \delta^2=& \delta^2\|f_j\|^2_{L^2(\beta \D_0)}\\
\nonumber \leq & \int_{\beta\D_0}\left|\int_{\mathbb{D}^q}\phi(z_1,\ldots,z_n) 
\frac{\partial \overline{h(z_1,\ldots,z_q)}}{\partial
\zb_1}dV(z_1,\ldots,z_q)\right|^2 
f_j(z_n)\overline{f_j(z_n)}dV(z_{q+1},\ldots,z_n) \\
\nonumber =&  \int_{\beta\D_0}\left|\int_{\mathbb{D}^q}\phi(z_1,\ldots,z_n)
f_j(z_n) 
\frac{\partial \overline{h(z_1,\ldots,z_q)}}{\partial \zb_1}dV(z_1,\ldots,z_q)
\right|^2 dV(z_{q+1},\ldots,z_n) \\
\label{Eqn1} =&  \int_{\beta\D_0}\left|\Big( \phi \alpha_j,   
\frac{\partial h}{\partial z_1}d\zb_2\wedge \cdots \wedge
d\zb_q\Big)_{\mathbb{D}^q}
\right|^2 dV(z_{q+1},\ldots,z_n)\\
\nonumber =&  \int_{\beta\D_0}\left|\Big( \phi \alpha_j,   
\dbar^*_{z_1\cdots z_q}(hd\zb_1\wedge \cdots \wedge d\zb_q)\Big)_{\mathbb{D}^q}
\right|^2 dV(z_{q+1},\ldots,z_n).
\end{align}
In the last two lines $(\cdot,\cdot)$ denotes the standard inner product between
forms on the fibers $\mathbb{D}^{q}\times\{(z_{q+1}, \ldots, z_{n})\}$, and
$\dbar^*_{z_1\cdots z_q}$ is the adjoint of $\overline{\partial}_{z_1\cdots
z_q}$, the $\overline{\partial}$ on each fiber (note that $hd\zb_1\wedge \cdots
\wedge d\zb_q \in dom(\dbar^*_{z_1\cdots z_q})$ since $h\in
C^{\infty}_{0}(\mathbb{D}^{q})$).
In the last equality above we used the fact that all the additional terms 
in $\dbar^*_{z_1\cdots z_q}(hd\zb_1\wedge \cdots \wedge d\zb_q)$ involve 
$d\zb_1$ and are thus (even pointwise) orthogonal to $\phi\alpha_{j}$. 

Next, note that 
\begin{align*}
\dbar_{z_1\cdots z_q} (P_{q-1}\phi\alpha_j|_{\mathbb{D}^q}) 
=(\dbar_{z_1\cdots z_n} P_{q-1}\phi\alpha_j)|_{\mathbb{D}^q} =0,
\end{align*}
where $|_{\mathbb{D}^q}$ denotes the restriction (pull back) of
forms to each fiber $\mathbb{D}^q\times \{(z_{q+1}, \ldots, z_{n})\}$.
Therefore, since $\phi\alpha_{j} = \phi\alpha_{j}|_{\mathbb{D}^q}$ on
$\mathbb{D}^{q}\times \beta\Omega_{0}$,
\begin{align*}
\Big( \phi \alpha_j,   
\dbar^*_{z_1\cdots z_q}hd\zb_1\wedge \cdots \wedge d\zb_q\Big)_{\mathbb{D}^q}
=& \Big( (\phi\alpha_j-P_{q-1}(\phi\alpha_j))|_{\mathbb{D}^q}, 
\dbar^*_{z_1\cdots z_q}hd\zb_1\wedge \cdots \wedge d\zb_q\Big)_{\mathbb{D}^q} \\
=&\Big( (H^{q-1}_{\phi}\alpha_j)|_{\mathbb{D}^q}, 
\dbar^*_{z_1\cdots z_q}hd\zb_1\wedge \cdots \wedge
d\zb_q\Big)_{\mathbb{D}^q}
\end{align*} 
and consequently 
\begin{align*}
\left|\Big( \phi \alpha_j,   
\dbar^*_{z_1\cdots z_n}hd\zb_1\wedge \cdots \wedge  d\zb_q 
\Big)_{\mathbb{D}^q}\right| 
\leq 
\left\|(H^{q-1}_{\phi}\alpha_j)|_{\mathbb{D}^q}\right\|_{L^2(\mathbb{D}^q)} 
   \left\| \dbar^*_{z_1\cdots z_q} 
  hd\zb_1\wedge \cdots \wedge d\zb_q \right\|_{L^2(\mathbb{D}^q)}.
\end{align*}
Combining this estimate with \eqref{Eqn1} and observing that
$|_{\mathbb{D}^{q}}$ decreases norms (pointwise on each fiber $\mathbb{D}^{q}$:
the omitted terms containing  $d\overline{z}_{m}$ with $m>q$ are orthogonal to
$(H^{q-1}_{\phi}\alpha_{j})|_{\mathbb{D}^{q}}$) and that
$\mathbb{D}^{q}\times\beta\Omega_{0} \subseteq \Omega$ gives
\begin{align*}
\delta \lesssim  \left\|(H^{q-1}_{\phi}\alpha_j)\right\|_{L^2(\D)} 
\left\| \dbar^*_{z_1\cdots z_q} 
hd\zb_1\wedge \cdots \wedge d\zb_q \right\|_{L^2(\mathbb{D}^q)}.
\end{align*}
Therefore, $\{H^{q-1}_{\phi}\alpha_j\}_{j=1}^{\infty}$ does not converge to 0 in
$L^2_{(0,q-1)}(\D)$ 
as $j\to \infty$ (since the second factor on the right hand side is nonzero). 

On the other hand, the sequence $\{\alpha_j\}_{j=1}^{\infty}$ converges to zero 
weakly 
in $A^2_{(0,q-1)}(\D)$, and therefore in $K^2_{(0,q-1)}(\D)$, as $j\to \infty$, 
and compactness of $H_{\phi}^{q-1}$ then forces convergence of
$\{H^{q-1}_{\phi}\alpha_j\}_{j=1}^{\infty}$ to zero in $L^2_{(0,q-1)}(\D)$. This
contradiction completes the proof of Theorem \ref{ThmNonCompact}.
\end{proof}
The above proof of Theorem \ref{ThmNonCompact} uses ideas from
\cite{FuStraube98, CuckovicSahutoglu09, ClosCelikSahutoglu18,
CelikSahutogluStraube20}; in turn, these ideas can be traced back at least to
\cite{Catlin81, DiederichPflug81}.

In the proof of Theorem \ref{converse}, we will repeatedly use the following
sufficient condition for compactness of an operator $T:X\rightarrow Y$, where
$X$ and $Y$ are Hilbert spaces (see e.g. \cite[Lemma 4.3(ii)]{StraubeBook}): for
all $\varepsilon>0$ there are a Hilbert space $Z_{\varepsilon}$, a linear
compact operator $S_{\varepsilon}:X\rightarrow Z_{\varepsilon}$, and a constant
$C_{\varepsilon}$ such that
\begin{align}\label{compcond}
 \|Tx\|_{Y} \leq \varepsilon\|x\|_{X} +
C_{\varepsilon}\|S_{\varepsilon}x\|_{Z_{\varepsilon}}.
\end{align}

In addition, we need the following sufficient conditions for compactness of a
Hankel operator on $\Omega$ (notation as in the theorem). 

\begin{lemma}\label{preplemma}
Suppose that $\Omega$ is as in Theorem \ref{converse} and
\begin{itemize}
\item[(i)] $\phi\in C^{1}(\overline{\Omega})$ and $\overline{\partial}\phi$ 
vanishes on $\cup_{j=1}^{N}\overline{V_{P_{j}}}$, or 
\item[(ii)] $\phi\in C(\overline{\Omega})$ vanishes on
$\cup_{j=1}^{N}\overline{V_{P_{j}}}$. 
\end{itemize}
Then $H^{q-1}_{\phi}$ is compact on $K^{2}_{(0,q-1)}(\Omega)$.
\end{lemma}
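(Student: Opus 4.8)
The plan is to prove the two cases of Lemma~\ref{preplemma} in tandem, reducing (ii) to (i) by an approximation argument and then proving (i) by localizing near the finitely many varieties $\overline{V_{P_{j}}}$ and away from them. For the reduction, suppose $\phi\in C(\overline{\Omega})$ vanishes on $K:=\cup_{j=1}^{N}\overline{V_{P_{j}}}$. Using a Stone--Weierstrass/Tietze-type approximation together with cut-off functions that vanish near $K$, I would write $\phi=\phi_{\varepsilon}+r_{\varepsilon}$ where $\phi_{\varepsilon}\in C^{1}(\overline{\Omega})$ vanishes in a neighborhood of $K$ (so in particular $\overline{\partial}\phi_{\varepsilon}$ vanishes on $K$, and case~(i) applies to give $H^{q-1}_{\phi_{\varepsilon}}$ compact) and $\|r_{\varepsilon}\|_{L^{\infty}(\Omega)}\to 0$. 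Since $\|H^{q-1}_{r_{\varepsilon}}\|\leq\|r_{\varepsilon}\|_{L^{\infty}(\Omega)}$, the operators $H^{q-1}_{\phi_{\varepsilon}}$ converge in norm to $H^{q-1}_{\phi}$, which is therefore compact. So the heart of the matter is case~(i).

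For case~(i), the strategy is to verify the compactness criterion \eqref{compcond} with $T=H^{q-1}_{\phi}$, $X=K^{2}_{(0,q-1)}(\Omega)$, $Y=L^{2}_{(0,q-1)}(\Omega)$. Fix $\varepsilon>0$. Because the $\overline{V_{P_{j}}}$ are finitely many, disjoint, compact, and of dimension $\geq q$, and because on a convex domain these are the only varieties in the boundary that obstruct compactness of the relevant $\overline{\partial}$--Neumann operator $N_{q}$ (\cite{FuStraube98, FuStraube01, StraubeBook}), I would choose a neighborhood $U$ of $K$ in $\overline{\Omega}$, together with a smooth cut-off $\chi$ that is $1$ on a smaller neighborhood of $K$ and supported in $U$. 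On $\mathrm{supp}(1-\chi)$, which stays away from all varieties of dimension $\geq q$, the domain $\Omega$ (being convex) has the property that the $\overline{\partial}$--Neumann operator localizes to something compact there — more precisely, one gets a compactness estimate for $(0,q)$--forms supported near $\mathrm{supp}(1-\chi)$. Applying Kohn's formula \eqref{Kohn}, $H^{q-1}_{\phi}f=\overline{\partial}^{*}N_{q}(\overline{\partial}\phi\wedge f)$, one splits $\overline{\partial}\phi\wedge f=\chi(\overline{\partial}\phi\wedge f)+(1-\chi)(\overline{\partial}\phi\wedge f)$. The $(1-\chi)$--piece is handled by the local compactness estimate away from $K$, contributing a term $\leq\varepsilon\|f\|+C_{\varepsilon}\|S_{\varepsilon}f\|$. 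For the $\chi$--piece, one uses that $\overline{\partial}\phi$ vanishes on $K$: by uniform continuity, $|\overline{\partial}\phi|\leq\varepsilon'$ on the neighborhood $U$ once $U$ is chosen small enough, so $\|\chi(\overline{\partial}\phi\wedge f)\|\leq\varepsilon'\|f\|$, and then $\|\overline{\partial}^{*}N_{q}(\chi\,\overline{\partial}\phi\wedge f)\|\lesssim\varepsilon'\|f\|$ since $\overline{\partial}^{*}N_{q}$ is bounded on a bounded convex (hence bounded pseudoconvex) domain. Choosing $\varepsilon'$ small relative to $\varepsilon$ and assembling the two pieces yields \eqref{compcond}, hence compactness.

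The main obstacle I anticipate is making rigorous the assertion that, away from the finitely many varieties $\overline{V_{P_{j}}}$ of dimension $\geq q$, one has a genuine compactness (subelliptic-type) estimate for $(0,q)$--forms on the convex domain. This requires the quantitative version of the Fu--Straube characterization: on a bounded convex domain, compactness of $N_{q}$ is equivalent to the boundary containing no analytic variety of dimension $\geq q$, and more locally, a boundary point lying on no such variety admits a neighborhood with a local compactness estimate (via bounded plurisubharmonic weight functions with arbitrarily large Hessian, as in \cite{FuStraube98, StraubeBook}). One must also be careful that the cut-off functions do not destroy the $\overline{\partial}$--closedness used in Kohn's formula, and that the term $\overline{\partial}^{*}N_{q}$ genuinely maps into $L^{2}_{(0,q-1)}$ with bounded norm; both are standard on bounded pseudoconvex domains but should be invoked explicitly. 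A secondary technical point is ensuring in the reduction from (ii) to (i) that the $C^{1}$ approximants can be taken to vanish on an actual neighborhood of $K$ rather than merely on $K$; this is arranged by first multiplying by a cut-off equal to $0$ near $K$ and $1$ outside a slightly larger neighborhood, at the cost of an error small in sup-norm because $\phi$ itself is small near $K$.
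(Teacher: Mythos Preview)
Your reduction of (ii) to (i) by uniform approximation with $C^{1}$ symbols vanishing near $K=\cup_{j}\overline{V_{P_{j}}}$ is exactly the paper's argument.

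For (i), however, your cutoff scheme has a genuine gap, and is also more circuitous than what the paper does. The gap is in the $(1-\chi)$--piece. You invoke a ``compactness estimate for $(0,q)$--forms supported near $\mathrm{supp}(1-\chi)$'', but the form you must control is $u=N_{q}\big((1-\chi)\overline{\partial}\phi\wedge f\big)$, and applying $N_{q}$ destroys the support condition: $u$ is \emph{not} supported away from $K$. So an estimate valid only for forms supported away from $K$ does not apply to $u$. What is actually needed is the \emph{compactness multiplier} formulation: any $\psi\in C(\overline{\Omega})$ vanishing on $K$ satisfies
\[
\|\psi u\|^{2}\leq\varepsilon'(\|\overline{\partial}u\|^{2}+\|\overline{\partial}^{*}u\|^{2})+C_{\varepsilon'}\|u\|_{-1}^{2}
\]
for all $u\in\mathrm{dom}(\overline{\partial})\cap\mathrm{dom}(\overline{\partial}^{*})\subset L^{2}_{(0,q)}(\Omega)$, with no support restriction on $u$ (\cite[Proposition~1, Theorem~3]{CelikStraube09}). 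This is precisely the tool the paper uses, and you have not supplied a substitute for it.

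Once one has the compactness multiplier estimate, your cutoff becomes redundant. The paper simply notes that each $\partial\overline{\phi}/\partial z_{j}$ vanishes on $K$ and is therefore a compactness multiplier. Starting from
\[
\|H^{q-1}_{\phi}f\|^{2}=\langle N_{q}(\overline{\partial}\phi\wedge f),\overline{\partial}\phi\wedge f\rangle
\lesssim\sum_{j}\big\|(\partial\overline{\phi}/\partial z_{j})\,N_{q}(\overline{\partial}\phi\wedge f)\big\|\,\|f\|,
\]
one applies the multiplier estimate directly to $u=N_{q}(\overline{\partial}\phi\wedge f)$ (using $\overline{\partial}u=0$ and $\|\overline{\partial}^{*}u\|\lesssim\|f\|$) to obtain
\[
\|H^{q-1}_{\phi}f\|^{2}\lesssim\varepsilon\|f\|^{2}+C_{\varepsilon}\|N_{q}(\overline{\partial}\phi\wedge f)\|_{-1}^{2},
\]
and then $S_{\varepsilon}f=N_{q}(\overline{\partial}\phi\wedge f)$ is compact into $W^{-1}_{(0,q)}(\Omega)$. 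No splitting into near/far pieces, and no smallness argument for $\overline{\partial}\phi$ near $K$, is needed.
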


Note that the condition in $(i)$ is stronger than saying that $\phi$ is
holomorphic along the $V_{P_{j}}$: $\overline{\partial}\phi = 0$ also in the
directions transverse to $V_{P_{j}}$.
 \begin{proof}
We start with $(i)$\footnote{When $\Omega$ is smooth, $(i)$ is a
special case of Theorem 1.3 in \cite{ChengJinWang}.}. We want to estimate
$\|H^{q-1}_{\phi}f\|^{2}$ in such a way that we can use \eqref{compcond}. So fix
$\varepsilon>0$. In view of \eqref{Kohn}, we have
\begin{align}\label{practical}
 \|H^{q-1}_{\phi}f\|^{2} =
\langle\overline{\partial}^{*}N_{q}(\overline{\partial}\phi\wedge f),
\overline{\partial}^{*}N_{q}(\overline{\partial}\phi\wedge f)\rangle 
 = \langle N_{q}(\overline{\partial}\phi\wedge f), \overline{\partial}\phi\wedge
f\rangle 
\end{align}
for $f\in K^{2}_{(0,q-1)}(\Omega)$.
The (absolute value of the) inner product on the right hand side is 
\begin{align}\label{innerprod}
\sum_{j=1}^{n}\;\sideset{}{'}\sum_{|K|=q-1} \int_{\Omega}
(N_{q}(\overline{\partial}\phi\wedge f))_{jK} 
\overline{(\partial\phi/\partial\overline{z_{j}})f_{K}}dV 
\lesssim \sum_{j=1}^{n}\|(\partial\overline{\phi}/\partial
z_{j})N_{q}(\overline{\partial}\phi\wedge f)\|\,\|f\|,
\end{align}
with the usual notation $f=\sideset{}{'}\sum_{|K|=q-1}f_{K}d\overline{z_{K}}$,
the $'$ indicating summation over increasing multi--indices, and
$jK=(j,k_{1},\ldots,k_{q-1})$.  Using the inequality $2ab\leq (a^{2}/\varepsilon
+ \varepsilon b^{2})$, where $a,b>0$, gives
\begin{align}\label{eq13}
 2\|(\partial\overline{\phi}/\partial z_{j})N_{q}(\overline{\partial}\phi\wedge
f)\|\,\|f\|\leq \frac{1}{\varepsilon}\|(\partial\overline{\phi}/\partial
z_{j})N_{q}(\overline{\partial}\phi\wedge f)\|^{2} + \varepsilon \|f\|^{2}.
\end{align}

Because $(\partial\overline{\phi}/\partial z_{j})$ vanishes on
$\cup_{j=1}^{N}\overline{V_{P_{j}}}$, it is a compactness multiplier on
$(0,q)$--forms (\cite[Proposition 1, Theorem 3]{CelikStraube09}): for all
$\varepsilon^{\prime}>0$, there is a constant $C_{\varepsilon^{\prime},j}$ such
that
\begin{align}\label{compmultiplier}
 \|(\partial\overline{\phi}/\partial z_{j})u\|^{2} \leq
\varepsilon^{\prime}(\|\overline{\partial}u\|^{2} +
\|\overline{\partial}^{*}u\|^{2}) + C_{\varepsilon^{\prime},j}\|u\|_{-1}^{2},
\end{align}
$u\in dom(\overline{\partial})\cap dom(\overline{\partial}^{*})\subset
L^{2}_{(0,q)}(\Omega)$. 
We apply \eqref{compmultiplier} to the form
$u=N_{q}(\overline{\partial}\phi\wedge f)$ on the right hand side of
\eqref{eq13}, with $\varepsilon^{\prime}=\varepsilon^{2}$, to obtain that the
right hand side of \eqref{eq13} is dominated by 
\begin{align}\label{hankelest}
\varepsilon\|\overline{\partial}^{*}N_{q}(\overline{\partial}\phi\wedge f)\|^{2}
+ C_{\varepsilon,j}\|N_{q}(\overline{\partial}\phi\wedge f)\|_{-1}^{2} +
\varepsilon\|f\|^{2} \\
 \lesssim \varepsilon\|f\|^{2} +
C_{\varepsilon,j}\|N_{q}(\overline{\partial}\phi\wedge f)\|_{-1}^{2};
\end{align}
the constants involved (other than $C_{\varepsilon,j}$) are independent of
$\varepsilon$ and $f$ (but not of $\phi$). We have used here that
$N_{q}(\overline{\partial}\phi\wedge f)\in dom(\overline{\partial})\cap
dom(\overline{\partial}^{*})$ and that
$\overline{\partial}N_{q}(\overline{\partial}\phi\wedge f)=0$ (since
$\overline{\partial}f=0$). Combining \eqref{practical} --\eqref{eq13} and
\eqref{hankelest} and summing over $j=1,\ldots,n$ results in the estimate we are
looking for: 
\begin{align*}
 \|H^{q-1}_{\phi}f\|^{2} \lesssim \varepsilon\|f\|^{2} +
C_{\varepsilon}\|N_{q}(\overline{\partial}\phi\wedge f)\|_{-1}^{2}\;;\; 
f\in K^{2}_{(0,q-1)}(\Omega),
\end{align*}
with $C_{\varepsilon} = max\{C_{\varepsilon,j}\,|\,j=1,\ldots,n\}$.

The operator $f\rightarrow N_{q}(\overline{\partial}\phi\wedge f)$ is continuous
from $K^{2}_{(0,q-1)}(\Omega)$ to $L^{2}_{(0,q)}(\Omega)$, hence is compact to
$W^{-1}_{(0,q)}(\Omega)$ (see e.g.  (\cite[Theorem 1.4.3.2]{GrisvardBook}: 
${W^{1}_{0}}(\Omega)\hookrightarrow L^{2}(\Omega)$ is compact; by duality, 
so is $L^{2}(\Omega)\hookrightarrow W^{-1}(\Omega)$). Therefore
\eqref{compcond} is satisfied for $T=H^{q-1}_{\phi}$, with
$X=K^{2}_{(0,q-1)}(\Omega)$, $Y=L^{2}_{(0,q-1)}(\Omega)$,
$Z_{\varepsilon}=W^{-1}_{(0,q)}(\Omega)$, and
$S_{\varepsilon}=N_{q}(\overline{\partial}\phi\wedge f)\,:
\,K^{2}_{(0,q-1)}(\Omega)\rightarrow W^{-1}_{(0,q)}(\Omega)$, and
$H^{q-1}_{\phi}$ is compact. (In this particular case, $Z_{\varepsilon}$ and
$S_{\varepsilon}$ are independent of $\varepsilon$.)

$(ii)$ is an easy consequence of $(i)$. Any symbol as in $(ii)$ can be
approximated uniformly on $\overline{\Omega}$ by symbols in
$C^{1}(\overline{\Omega})$ and vanishing in a neighborhood of the compact set
$\cup_{j=1}^{N}\overline{V_{P_{j}}}$. These symbols satisfy the assumption in
$(i)$, and the corresponding (compact) Hankel operators converge to
$H^{q-1}_{\phi}$ in norm\footnote{Because $\|H^{q-1}_{\phi} -
H^{q-1}_{\phi_{j}}\| = \|H^{q-1}_{\phi - \phi_{j}}\| \leq \|\phi -
\phi_{j}\|_{L^{\infty}(\Omega)}$.}. Therefore, $H^{q-1}_{\phi}$ is compact as
well.\footnote{By \cite[Proposition 1, Theorem 3]{CelikStraube09}, $\phi$ is a
compactness multiplier for $(0,q)$--forms. When $\Omega$ is smooth (and $q=1$),
one can use \cite[Theorem 1]{CelikZeytuncu16}: symbols which are compactness
multipliers on a bounded smooth pseudoconvex domain produce compact Hankel
operators.}
\end{proof}

We are now ready to prove Theorem \ref{converse}.
\begin{proof}[Proof of Theorem \ref{converse}]
For the moment, fix $j$, $1\leq j\leq N$. $\overline{V_{P_{j}}}$ is a convex
subset of an affine subspace, and $\phi$ is holomorphic on $V_{P_{j}}$ and
continuous on the closure. Via dilatation, we can approximate $\phi$ uniformly
of $\overline{V_{P_{j}}}$, first by functions holomorphic in a relative
neighborhood of $\overline{V_{P_{j}}}$, and then (by trivial extension)
holomorphic in a neighborhood in $\mathbb{C}^{n}$ of $\overline{V_{P_{j}}}$.

To prove compactness of $H^{q-1}_{\phi}$, we use the sufficient condition in
\eqref{compcond}. So fix $\varepsilon>0$. For $j=1,\ldots,N$, choose open sets
$U_{j}$ in $\mathbb{C}^{n}$ that contain $\overline{V_{P_{j}}}$ and are pairwise
disjoint, and cutoff functions $\chi_{j}\in C^{\infty}_{0}(U_{j})$,
$0\leq\chi_{j}\leq 1$, which are identically equal to one in a neighborhood of
$\overline{V_{P_{j}}}$. In light of the previous paragraph, we can do that in
such a way that there exists a holomorphic function
$h_{j}$ on $U_{j}$ that first approximates $\phi$ to within $\varepsilon$ on
$\overline{V_{P_{j}}}$, and then by continuity to within $2\varepsilon$ on
$U_{j}\cap\overline{\Omega}$ (shrinking $U_{j}$ if necessary). Also set
$\chi_{0} =(1-\sum_{j=1}^{N}\chi_{j})\in C(\overline{\Omega})$. With this setup,
we split $H^{q-1}_{\phi}$ as follows:
\begin{align}\label{split}
 H^{q-1}_{\phi} = H^{q-1}_{(\chi_{0}\phi)} + H^{q-1}_{\sum_{j=1}^{N}\chi_{j}\phi} 
= H^{q-1}_{(\chi_{0}\phi)} + H^{q-1}_{\sum_{j=1}^{N}\chi_{j}h_{j}} 
+ H^{q-1}_{\sum_{j=1}^{N}\chi_{j}(\phi-h_{j})}\;.
\end{align}
$\chi_{0}\phi$ vanishes on $\cup_{j=1}^{N}\overline{V_{P_{j}}}$, and 
Lemma \ref{preplemma}, part $(ii)$, implies that $H^{q-1}_{(\chi_{0}\phi)}$ 
is compact. It remains to consider the remaining terms on the right hand 
side of \eqref{split}.

We have $\sum_{j=1}^{N}\chi_{j}h_{j}\in C^{\infty}(\overline{\Omega})$ and 
$\overline{\partial}(\sum_{j=1}^{N}\chi_{j}h_{j}) = \sum_{j=1}^{N}h_{j}\overline{\partial}\chi_{j}$. 
This symbol therefore satisfies the assumptions in part $(i)$ of Lemma \ref{preplemma}, 
and the corresponding Hankel operator is compact. Because 
$\|\chi_{j}(\phi - h_{j})\|_{L^{\infty}(\Omega)} \leq 2\varepsilon$, and the $\chi_{j}$ 
have disjoint supports, the norm of the last operator in \eqref{split} is 
bounded by $2\varepsilon$, and we conclude
\begin{align}\label{chi-k} 
 \|H^{q-1}_{\phi}f\| \leq 2\varepsilon\|f\| + \|H^{q-1}_{(\chi_{0}\phi + \sum_{j=1}^{N}\chi_{j}h_{j})}f\|\;.
\end{align}

Estimate \eqref{chi-k} is  \eqref{compcond} (after rescaling $\varepsilon$) with
$X=K^{2}_{(0,q-1}(\Omega)$, $Y=L^{2}_{(0,q-1}(\Omega)$,
$Z_{\varepsilon}=L^{2}_{(0,q-1)}(\Omega)$, and the compact operator
$S_{\varepsilon}=H^{q-1}_{(\chi_{0}\phi + \sum_{j=1}^{N}\chi_{j}h_{j})} 
= H^{q-1}_{\chi_{0}\phi} + H^{q-1}_{\sum_{j=1}^{N}\chi_{j}h_{j}}$ (in contrast to 
$Z_{\varepsilon}$, $S_{\varepsilon}$ does depend on $\varepsilon$, via $\chi_{j}$ 
and $h_{j}$). As  $\varepsilon > 0$ was arbitrary, we conclude that $H^{q-1}_{\phi}$ 
is compact and complete the proof of Theorem \ref{converse}.
\end{proof}

%%%%%%%%%%%%%%%%%%%%%%%%%%%%%%%%%%%

\end{document}